\documentclass[reqno]{amsart}
\usepackage{amsmath,amssymb,euscript,graphicx}
\usepackage[arrow,matrix,curve]{xy}
        \textwidth 18cm
        \textheight 25cm
        \evensidemargin -.75cm
        \oddsidemargin -.75cm
        \topmargin -1.25cm
        \headsep 0cm
        \headheight 0cm
        \footskip .75cm
        \parskip 1.2\parskip plus 3pt minus 1pt
        \pagestyle{plain}
\numberwithin{equation}{section}

\newcommand\bR{\mathbb{R}}
\newcommand\cF{\EuScript{F}}
\newcommand\cR{\EuScript{R}}

\newcommand\id{\operatorname{id}}
\newcommand\im{\operatorname{im}}
\newcommand\dr{\operatorname{d}}

\newcommand\pdr\partial
\newcommand\J{\mathrm{J}}
\newcommand\F{\mathrm{F}}
\newcommand\Gr{\mathrm{Gr}}

\newcommand{\Span}{\mathrm{span}}

\newcommand\al\alpha
\newcommand\gam\gamma
\newcommand\del\delta
\newcommand\veps\varepsilon
\newcommand\lam\lambda
\newcommand\tht\theta
\newcommand\om\omega
\newcommand\Del\varDelta
\newcommand\Gam\varGamma
\newcommand\Om\varOmega

\newcommand\map[1]{\stackrel{{#1}}\longrightarrow}

\newcommand\mfrac[2]{\raisebox{-.1ex}{\scalebox{1.3}[1.3]{$\frac{#1}{#2}$}}}
\newcommand\msum[2]{\raisebox{.1ex}{\scalebox{0.8}[0.8]
                   {$\displaystyle\sum_{#1}^{#2}$}}}
\newcommand\moplus[2]{\raisebox{.15ex}{\scalebox{0.9}[0.9]
                   {$\displaystyle\bigoplus_{#1}^{#2}$}}}
\newcommand\mcup[2]{\raisebox{.15ex}{\scalebox{0.9}[0.9]
                   {$\displaystyle\bigcup_{#1}^{#2}$}}}

\theoremstyle{definition}
\newtheorem{definition}{Definition}[section]

\theoremstyle{plain}
\newtheorem{theorem}[definition]{Theorem}
\newtheorem{proposition}[definition]{Proposition}
\newtheorem{lemma}[definition]{Lemma}
\newtheorem{corollary}[definition]{Corollary}

\begin{document}
\setlength{\abovedisplayskip}{3pt}
\setlength{\belowdisplayskip}{3pt}

\title{On a new filtration of the variational bicomplex}
\maketitle

\begin{center}
{\large Siye Wu}\footnote{E-mail address: {\tt swu@math.nthu.edu.tw}}\\
\medskip
{\small Department of Mathematics, National Tsing Hua University,
Hsinchu 30013, Taiwan}\\
\medskip
and\\
\medskip
{\large Haoran Yang}\footnote{E-mail address: {\tt Haoran.Yang@anu.edu.au}}\\
\medskip
{\small Mathematical Sciences Institute, The Australian National University,
Canberra, ACT 2600, Australia}
\end{center}

\begin{abstract}
We define a filtration on the variational bicomplex according to jet order.
The filtration is preserved by the interior Euler operator, which is not a
module homomorphism with respect to the ring of smooth functions on the jet
space.
However, the induced maps on the graded components of this filtration are.
Furthermore, the space of functional forms in the image of the interior Euler
operator inherits a filtration.
Though the filtered subspaces are not submodules either, the graded components
are isomorphic to linear spaces which do have module structures.
This works for any fixed degree of the functional forms.
In this way, the condition that a functional form vanishes can be stated
concisely with a module basis.
We work out explicitly two examples: one for functional forms of degree two
in relation to the Helmholtz conditions and the other of arbitrary degree but
with jet order one.
\end{abstract}

\medskip

\section{Introduction}

The variational bicomplex is a double complex of differential forms on the
infinite jet bundle of a fibred manifold, designed to give a unified geometric
language for Lagrangians, variations, equations of motion, and conservation
laws.
After the pioneering work of Vinogradov \cite{Vi} and Tulczyjew \cite{Tu},
much progress has been made (see for example \cite{An,BGG,Ol,Ta,Ts}), laying
the mathematical foundation for calculus of variations and classical field
theory.
Applications of this approach cover the study of equations of motion, the
inverse problem (the Helmholtz conditions) as well as symmetries and
conservation laws (Noether's theorem).
More recently, the theory has been applied to modern physics settings such as
gauge theory, asymptotic symmetries, and BRST and BV formalisms \cite{HT,BB}.

The variational bicomplex comes from the space of differential forms on the
total space of the jet bundle.
There is a bigrading according to the horizontal and vertical degrees.
Vinogradov's filtration is according to the vertical degree, which leads to
the $C$-spectral sequence \cite{Vi}.
Instead, we define a new filtration on the variational bicomplex according
to jet order.
The filtration is preserved by the interior Euler operator.
Though this operator is not a module homomorphism with respect to the ring of
smooth functions on the jet space, the induced maps on the graded components
of this filtration are.
Furthermore, the space of functional forms in the image of the interior Euler
operator inherits a filtration as well.
Though the filtered subspaces are not submodules either, the graded components
are isomorphic to linear spaces which do have module structures.
In this way, the condition that a functional form vanishes can be stated
concisely using a module basis.
For functional forms of degree two, this condition can be worked out without
consideration of the module structure.
However, progress becomes hard when the degree is three or higher \cite{An}.
Our result for any fixed degree of the functional forms sheds light on this
complicated open problem.
We work out explicitly two examples: one for functional forms of degree two
in relation to the Helmholtz conditions and the other of arbitrary degree but
with jet order one.

In this paper, we follow the notations of Anderson \cite{An}.
The rest of the paper is organised as follows.

In Section~2, we recall the notions of (infinite) jet bundle
$\J^\infty E\to M$ of a fibre bundle $E\to M$, the ring $\cR^\infty$ of smooth
functions on the total space $\J^\infty E$ and the space of differential forms
$\Om^\bullet(\J^\infty E)$, which is an $\cR^\infty$-module with a bigrading
$\Om^{r,s}$ according to the horizontal degree $r$ and vertical degree $s$.
For classical field theory, $M$ is the spacetime manifold of dimension $n$,
and the fields are sections of the bundle $E$.
We introduce a new filtration $\F^\bullet$ on $\Om^{n,s}$ (and a subspace
$\Om^{n,s}_0$ in it) according to the jet order.
Here the filtered subspaces $\F^\bullet\Om^{n,s}$ are $\cR^\infty$-modules,
and so are the graded components $\Gr^\bullet\Om^{n,s}$.

In Section~3, motivated by calculus of variation, we review the Euler
derivative and its higher versions.
We then explain the interior Euler operator $I$, which is a projection
operator on each $\Om^{r,s}$ but fails to be $\cR^\infty$-linear.
A key observation is that the map $\bar I$ induced by $I$ on the graded
components $\Gr^\bullet\Om^{n,s}$ are nevertheless $\cR^\infty$-module
homomorphisms.
The functional forms are elements of $\cF^s:=I(\Om^{r,s})$, which is an
$\bR$-linear subspace but not an $\cR^\infty$-submodule of $\Om^{r,s}$ except
for $s=1$.
We also recall the Euler-Lagrange complex, which includes the coboundary
operator $\del_V\colon\cF^s\to\cF^{s+1}$ for each $s\ge1$.

In Section~4, we define a filtration of $\cF^s$ for each $s\ge1$ induced from
that of $\Om^{n,s}$.
Our main result is that although the filtered subspaces $\F^\bullet\,\cF^s$
in $\cF^s$ are not $\cR^\infty$-modules, the graded components
$\Gr^\bullet\,\cF^s$ are isomorphic to subspaces of $\Gr^\bullet\Om^{n,s}$
which do have $\cR^\infty$-module structures.
In fact, such a subspace is the image of the induced map $\bar I$ on the graded
components $\Gr^\bullet\Om^{n,s}$.
An element of $\cF^s$ vanishes if and only if all its graded components vanish,
a condition that is easier to handle due to the $\cR^\infty$-module structures.
These results are new, especially when $s\ge3$.

In Section~5, we specialise to $s=2$ and recover the known results, with an
efficient derivation of the Helmholtz condition using the $\cR^\infty$-module
structure.
We then work out explicitly another case of arbitrary $s\ge1$ and jet
order~$1$.

We conclude with outlook in Section~6.

\section{A filtration on the variational bicomplex according to jet order}

We begin with some preliminaries on jet spaces and functions and differential
forms on them.

Consider a classical field theory on the spacetime manifold $M$ in which fields
are bosonic and are sections of a smooth fibre bundle $\pi:E\to M$.
Let $M$ be $n$ dimensional and let $\{x^i\}_{1\le i\le n}$ be local
coordinates on $M$.
The fibres of $E$ are also finite dimensional and let $\{u^\al\}$ be their
local coordinates.
Given a multi-index $I=(i_1,\dots,i_k)$, $1\le i_1,\dots,i_k\le n$, we write
$|I|=k$ and $\pdr_I=\pdr^k/\pdr x^{i_1}\cdots\pdr x^{i_k}$.
The $k$-jet space $\J^kE$ is the total space a fibre bundle over $M$ whose
fibre at $x\in M$ consists of equivalence classes of local sections $\phi$ of
$E$ at $x$ with the same $\pdr_I\phi(x)$ for all $|I|\le k$.
Local coordinates along the fibres of $\pi^k\colon \J^kE\to M$ are $u^\al_I$,
$0\le|I|\le k$.
There are natural projections $\pi^{k+1}_k\colon\J^{k+1}E\to\J^kE$ which are
vector bundles.
We identify $\J^0E$ with $E$ and $\pi^k_0\colon\J^kE\to E$ are also vector
bundles for all $k\ge0$.
The infinite jet space $\J^\infty E$ is the inverse limit of the system of
projections $\{\pi^l_k\}_{l\ge k\ge0}$.
We then have the obvious natural projections
$\pi^\infty_k\colon\J^\infty E\to\J^kE$ for all $k\ge0$, and
$\pi^\infty\colon\J^\infty E\to M$.

Denote by $\cR^k=C^\infty(\J^kE)$ the ring of smooth functions on $\J^k(E)$.
Since $\pi^{k+1}_k$ for each $k\ge0$ is a surjection, the pull-back map
$(\pi^{k+1}_k)^*\colon\cR^k\to\cR^{k+1}$ is an inclusion, by which we can
identify $\cR^k$ as a subring of $\cR^{k+1}$.
Using this identification, we define the ring
$\cR^\infty=C^\infty(\J^\infty E)$ of smooth functions on $\J^\infty(E)$ as
the infinite union
\[ \cR^\infty:=\mcup{k\ge0}{}\,\cR^k. \]
An element of $\cR^\infty$ is a function depending on jets up to some finite
but undetermined order.
Similarly, since the pull-back map
$(\pi^{k+1}_k)^*\colon\Om^p(\J^kE)\to\Om^p(\J^{k+1}E)$ on $p$-forms is also
an inclusion for each $k\ge0$, we can regard $\Om^p(\J^kE)$ as a subspace of
$\Om^p(\J^{k+1}E)$ and define the space of $p$-forms on the infinite jet space
$\J^\infty E$ again as the infinite union
\[ \Om^p(\J^\infty E):=\mcup{k\ge0}{}\,\Om^p(\J^kE). \]
Clearly $R^\infty$ is identified as $\Om^0(\J^\infty(E))$ and
$\Om^p(\J^\infty E)$ is an $\cR^\infty$-module.

The variational bicomplex provides a canonical splitting
\[ \Om^p(\J^\infty E)=\moplus{r+s=p}{}\,\Om^{r,s}(\J^\infty E), \]
where $r$ counts horizontal degree (along $M$) of forms and $s$ counts the
vertical degree.
Vertical $1$-forms are contact forms locally given by
$\tht_I^\al=\dr\!u^\alpha_I+u^\alpha_{Ij}\dr\!x^j$, and
$\tht^{\al_1}_{I_1}\wedge\cdots\wedge\tht^{\al_s}_{I_s}$ has vertical degree
$s$.
Each $\Om^{r,s}(\J^\infty E)$ is also an $\cR^\infty$-module and the above
direct sum is a decomposition of $\cR^\infty$-modules.
Locally, an element in $\Om^{r,s}(\J^\infty E)$ is a finite $\cR^\infty$-linear
combination of $\tht^{\al_1}_{I_1}\wedge\cdots\wedge\tht^{\al_s}_{I_s}\wedge
\dr\!x^{i_1}\wedge\cdots\wedge\dr\!x^{i_r}$.
In particular, for top horizontal forms, a local $\cR^\infty$-module basis of
$\Om^{n,s}(\J^\infty E)$ is given by
$\tht^{\al_1}_{I_1}\wedge\cdots\wedge\tht^{\al_s}_{I_s}\wedge\nu$
up to the obvious antisymmetry.
Here $\nu$ is a volume form on $M$, which we assume exists and is chosen.

We now introduce an ascending filtration on the space of $(n,s)$-forms on
$\J^\infty(E)$ for each $s\ge0$.
That is, we construct filtered subspaces
$\F^l\Om^{n,s}(\J^\infty E)\subset\Om^{n,s}(\J^\infty E)$, or
$\F^l\Om^{n,s}\subset\Om^{n,s}$ for short, such that
\[ 0=\F^{-1}\Om^{n,s}\subset\F^0\Om^{n,s}\subset\F^1\Om^{n,s}\subset
   \cdots\subset\F^\infty\Om^{n,s}=\Om^{n,s}.  \]
The filtration is based on jet-order data.
For each $l\ge 0$, we define
\[ \F^l\Om^{n,s}(\J^\infty E):=\Span_{\cR^\infty}\big\{\,
   \tht^{\al_1}_{I_1}\wedge\cdots\wedge\tht^{\al_s}_{I_s}\wedge\nu
   :|I_1|+\cdots+|I_s|\le l\,\big\}. \]
Then the associated graded components are
\[ \Gr^l\Om^{n,s}:=\mfrac{\F^l\Om^{n,s}}{\F^{l-1}\Om^{n,s}}
 =\Span_{\cR^\infty}\bigl\{\,
 \tht^{\al_1}_{I_1}\wedge\cdots\wedge\tht^{\al_s}_{I_s}\wedge\nu
 :|I_1|+\cdots+|I_s|=l\,\bigr\}. \]

We remark here that the filtered spaces $\F^l\Om^{n,s}$ and their graded
components $\Gr^l\Om^{n,s}$ do not depend on the choice of local coordinates
on $M$ and $E$.
The $\bR$-vector subspaces $\F^l\Om^{n,s}(\J^\infty E)$ are actually
$\cR^\infty$-submodules of $\Om^{n,s}(\J^\infty E)$ and thus the graded
components are $\cR^\infty$-modules.
In fact, $\tht^{\al_1}_{I_1}\wedge\cdots\wedge\tht^{\al_s}_{I_s}\wedge\nu$
form an $\cR^\infty$-module basis of $\F^l\Om^{n,s}$ for
$|I_1|+\cdots+|I_s|\le l$ and an $\cR^\infty$-module basis of $\Gr^l\Om^{n,s}$
for $|I_1|+\cdots+|I_s|=l$.
We also note in passing that there is an opposite, descending filtration given
by
\[ {}'\F^l\Om^{n,s}(\J^\infty E):=\Span_{\cR^\infty}\bigl\{\,
   \tht^{\al_1}_{I_1}\wedge\cdots\wedge\tht^{\al_s}_{I_s}\wedge\nu
   :|I_1|+\cdots+|I_s|\ge l\,\bigr\}, \]
which has the same graded components.
But we will not pursue it in our present work.

In what follows we will also need an $\cR^\infty$-submodule of
$\Om^{n,s}(\J^\infty E)$ defined by
\[ \Om^{n,s}_0(\J^\infty E):=\Span_{\cR^\infty}\big\{\tht^{\al_1}\wedge
 \tht^{\al_2}_{I_2}\wedge\cdots\wedge\tht^{\al_s}_{I_s}\wedge\nu\big\}
 \subset\Om^{n,s}(\J^\infty E). \]
There is an induced ascending filtration on
$\Om^{n,s}_0=\Om^{n,s}_0(\J^\infty E)$ given by, for all $l\ge0$,
\[  \F^l\Om^{n,s}_0:=\Om^{n,s}_0\cap\F^l\Om^{n,s}=\Span_{\cR^\infty}\big\{\,
 \tht^{\al_1}\wedge\tht^{\al_2}_{I_2}\wedge\cdots\wedge\tht^{\al_s}_{I_s}
 \wedge\nu:|I_2|+\cdots+|I_s|\le l\,\big\}.  \]
The associated graded components are
\[ \Gr^l\Om^{n,s}_0:=\mfrac{\F^l\Om^{n,s}_0}{\F^{l-1}\Om^{n,s}_0}
   =\Span_{\cR^\infty}\big\{\,
    \tht^{\al_1}\wedge\cdots\wedge\tht^{\al_s}_{I_s}\wedge\nu
    :|I_2|+\cdots+|I_s|=l\,\big\}. \]
Again, both $\F^l\Om^{n,s}_0$ and $\Gr^l\Om^{n,s}_0$ are $\cR^\infty$-modules
for all $s\ge0$, $l\ge0$.

\section{The interior Euler operator, functional forms and cohomology}

With the decomposition of $p$-forms on $\J^\infty E$ into $(r,s)$-forms,
$r+s=p$, the exterior differentiation
$\dr\colon\Om^p(\J^\infty E)\to\Om^{p+1}(\J^\infty E)$ splits as
$\dr=\dr_H+\dr_V$, where
$\dr_H\colon\Om^{r,s}(\J^\infty E)\to\Om^{r+1,s}(\J^\infty E)$,
$\dr_V\colon\Om^{r,s}(\J^\infty E)\to\Om^{r,s+1}(\J^\infty E)$, are the
horizontal, vertical differentiations, respectively.
We have $\dr_H^2=0$, $\dr_V^2=0$, $\dr_H\dr_V+\dr_V\dr_H=0$.
In classical field theory, a Lagrangian is an element
$\lam\in\Om^{n,0}(\J^\infty E)$, so that the classical action is $\int_M\lam$,
assuming that either $M$ is compact or the fields decay sufficiently fast at
infinity.
The fundamental formula in the calculus of variation is
\[  \dr_V\lam=E_\al(\lam)\tht^\al+\dr_H\eta  \]
for some $\eta\in\Om^{n-1,1}(\J^\infty E)$.
Here $E_\al(\lam)$ is the Euler derivative of $\lam$, i.e.,
\[  E_\al(\lam):=\msum{|I|\ge0}{}(-D)_I\big(\pdr^I_\al\lam\big), \]
where $\pdr^I_\al:=\pdr/\pdr u^\al_I$ is a vector field on $\J^\infty E$,
$D_j:=\pdr_j+\sum_{k=0}^\infty u^\al_{Ij}\,\pdr^I_\al$ is the total derivative
along $x^j$, and $D_I:=D_{i_1}\cdots D_{i_k}$ for a multi-index
$I=(i_1,\dots,i_k)$.
The equations of motion are the Euler-Lagrange equations $E_\al(\lam)=0$.
Furthermore, $\eta$ can be expressed in terms of the higher Euler derivatives
$E_\al^I(\lam)$ of $\lam$, i.e.,
\[ \eta=\msum{|I|\ge0}{}D_I(E_\al^{Ij}(\lam)\tht^\al)
   \wedge(\pdr_j\lrcorner\,\nu),\qquad\qquad
E_\al^I(\lam):=\msum{J\supset I}{}\,
   \scalebox{.85}[.85]{$\begin{pmatrix}|J|\\ |I|\end{pmatrix}$}
   (-D)_{J\backslash I}(\pdr_\al^J\lam). \]

The above decomposition of $\dr_V\lam\in\Om^{n,1}(\J^\infty E)$ can be
generalised.
The interior Euler operator on $\om\in\Om^{r,s}(\J^\infty E)$~is
\[ I(\om):=\mfrac1s\,\tht^\al\wedge\msum{|I|\ge0}{}
   (-D)_I\big(\pdr^I_\al\,\lrcorner\,\om\big),  \]
which reduces to $E_\al\tht^\al$ when $\om=\dr_V\lam$.
Here $(r,s)$ is arbitrary, but with $r\ge0$, $s\ge1$, and $I(\om)$ remains in
$\Om^{r,s}(\J^\infty E)$.
Furthermore, any $\om\in\Om^{r,s}(\J^\infty E)$ decomposes uniquely as
$\om=I(\om)+\dr_H\eta$ for some $\eta\in\Om^{r-1,s}(\J^\infty E)$ that can be
obtained from $\om$ by higher Euler operators.
We have $I\circ\dr_H=0$ and $I^2=I$, i.e., $I$ is a projection operator or
an idempotent.
For all $s\ge1$, we set
$\cF^s(\J^\infty E):=I(\Om^{n,s}(\J^\infty E))$ or $\cF^s$ for short.
Elements of $\cF^s$ are called the functional $s$-forms.
For $s=1$, we have $\cF^1=\F^0\Om^{n,1}=\Gr^0\Om^{n,1}=\Om^{n,1}_0$, which is
an $\cR^\infty$-module of $\Om^{n,1}$.
But for general $s\ge2$, $\cF^s$ are $\bR$-linear subspaces of
$\Om^{n,s}_0(\J^\infty E)$ (see below).

The operators $\del_V\colon\cF^s\to\cF^{s+1}$ defined by $\del_V:=I\circ\dr_V$
for all $s\ge1$ satisfy $\del_V^2=0$, making $\cF^\bullet$ a cochain complex.
Making use of the Euler derivative operator $E\colon\Om^{n,0}\to\cF^1$, this
is part of the Euler-Lagrange complex
\[ 0\to\bR\to\Om^{0,0}\map{\dr_H}\Om^{1,0}\to\cdots\to\Om^{n-1,0}\map{\dr_H}
   \Om^{n,0}\map{E}\cF^1\map{\del_V}\cF^2\map{\del_V}\cF^3\to\cdots.  \] 
Its cohomology groups are
\[ H^p(\Om^{\bullet,0},\dr_H)\cong H^p(E),\;p\le n;\qquad
   H^s(\cF^\bullet,\del_V)\cong H^{n+s}(E),\;s\ge1. \]
On the other hand, for all $s\ge1$, we have an exact sequence
\[ 0\to\Om^{0,s}\map{\dr_H}\Om^{1,s}\to\cdots\to\Om^{n-1,s}\map{\dr_H}
   \Om^{n,s}\map{I}\cF^s\to0,  \]
known as the acyclicity of the variational bicomplex.
We refer the reader to \cite[Chap.~5]{An} for a comprehensive treatise.

\begin{proposition}\label{pro:IbarI}
1. The map $I$ restricted to $\Om^{n,s}_0$ preserves the filtration.
That is, $I(\F^l\Om^{n,s}_0)\subset\F^l\Om^{n,s}_0$ for all $l\ge0$.\\
2. The induced map $\bar I$ acting on the graded components $\Gr^l\Om^{n,s}_0$
are idempotent $\cR^\infty$-module homomorphisms.
\end{proposition}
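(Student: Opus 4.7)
The strategy is to carry out the interior-Euler computation on a typical generator of $\F^l\Om^{n,s}_0$; statement 1 drops out directly, and statement 2 follows from a filtration estimate on the Leibniz cross-terms.

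I would begin with a monomial $\om=f\,\theta^{\al_1}\wedge\theta^{\al_2}_{I_2}\wedge\cdots\wedge\theta^{\al_s}_{I_s}\wedge\nu$ where $f\in\cR^\infty$ and $|I_2|+\cdots+|I_s|\le l$, and compute $\pdr^J_\al\lrcorner\,\om$ using the antiderivation rule together with the identity $\pdr^J_\al\lrcorner\,\theta^\beta_K=\delta^\beta_\al\delta^J_K$. Only the multi-indices $J=\emptyset$ (hitting $\theta^{\al_1}$) and $J=I_k$ for $k\ge2$ (hitting $\theta^{\al_k}_{I_k}$) give nonzero contributions. Setting $I_1:=\emptyset$, substitution into the definition of $I$ yields
\[ I(\om)=\mfrac1s\msum{k=1}{s}(-1)^{k-1}\,\theta^{\al_k}\wedge(-D)_{I_k}\bigl[f\,\theta^{\al_1}\wedge\cdots\widehat{\theta^{\al_k}_{I_k}}\cdots\wedge\theta^{\al_s}_{I_s}\wedge\nu\bigr], \]
with the hatted factor removed. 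The leading wedge factor $\theta^{\al_k}$ has jet order zero, so the output already sits in $\Om^{n,s}_0$. Expanding $(-D)_{I_k}$ by Leibniz and using $D_j\theta^\beta_K=\theta^\beta_{Kj}$ together with $D_j\nu=0$, every term in the expansion has contact-jet-order sum of the surviving $s-1$ factors bounded by $(l-|I_k|)+|I_k|=l$: derivatives landing on a contact factor raise one jet index each, while derivatives landing on $f$ leave the contact factors intact. This proves part~1.

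Part 2 falls out of the same expansion. The difference $I(f\om)-f\,I(\om)$ consists exactly of the Leibniz cross-terms in which at least one $D_j$ hits $f$, and each such cross-term strictly lowers the contact-jet-order sum to at most $l-1$, hence lies in $\F^{l-1}\Om^{n,s}_0$. Therefore $\bar I(f\cdot[\om])=f\cdot\bar I([\om])$ on $\Gr^l\Om^{n,s}_0$, so $\bar I$ is $\cR^\infty$-linear; idempotence $\bar I^2=\bar I$ is inherited directly from the known identity $I^2=I$ after passing to the quotient. The main obstacle I anticipate is the Leibniz bookkeeping---tracking the signs from the antiderivation rule, handling the $k=1$ and $k\ge2$ contributions uniformly, and confirming that the $J=\emptyset$ term simply reproduces a multiple of the input---but the essential point is the single dichotomy that each derivative of $f$ costs one unit of filtration while each derivative of a contact factor does not, which simultaneously delivers filtration preservation and $\cR^\infty$-linearity on the graded pieces.
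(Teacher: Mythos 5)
Your proposal is correct and follows essentially the same route as the paper: both compute $I$ on a typical $\cR^\infty$-generator of $\F^l\Om^{n,s}_0$, expand $(-D)_{I_k}$ by the Leibniz rule, and observe that each total derivative falling on the coefficient function lowers the contact jet-order sum by one while derivatives on contact factors keep it at $l$, which simultaneously yields filtration preservation and $\cR^\infty$-linearity of $\bar I$ on $\Gr^l\Om^{n,s}_0$. The idempotence of $\bar I$ is likewise inherited from $I^2=I$ in both arguments.
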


\begin{proof}
1. We start with a typical element
\[ \eta_A:=A^{I_2\cdots I_s}_{\al_1\al_2\cdots\al_s}\tht^{\al_1}\wedge
  \tht^{\al_2}_{I_2}\wedge\cdots\wedge\tht^{\al_s}_{I_s}\wedge\nu
  \in\Gr^l\Om^{n,s}_0\subset\F^l\Om^{n,s}_0, \]
where the multi-indices $I_2,\dots,I_s$ are summed over but with
$|I_2|,\dots,|I_s|$ fixed so that $|I_2|+\cdots+|I_s|=l$, and
$A^{I_2\cdots I_s}_{\al_1\al_2\cdots\al_s}\in\cR^\infty$ are coefficients in
front of the $\cR^\infty$-module basis of $\Om^{n,s}_0$.
Its image under the map $I$ is
\[ I(\eta_A)=\mfrac1s\Big(A^{I_2\cdots I_s}_{\al_1\al_2\cdots\al_s}\tht^{\al_1}
\wedge\tht^{\al_2}_{I_2}\wedge\cdots\wedge\tht^{\al_s}_{I_s}+\,\msum{k=2}s\,
(-1)^{k-1}\tht^{\al_k}\wedge(-D)_{I_k}
\big(A^{I_2\cdots I_s}_{\al_1\al_2\cdots\al_s}
\tht^{\al_1}\wedge\tht^{\al_2}_{I_2}\wedge\cdots\widehat{\tht^{\al_k}_{I_k}}
\cdots\wedge\tht^{\al_s}_{I_s}\big)\Big)\wedge\nu. \]
Here and below, quantities under hats are omitted.
For each term in the sum on the right hand side, the derivatives in $D_{I_k}$ 
can act on either $A^{I_2\cdots I_s}_{\al_1\al_2\cdots\al_s}$ or on
$\tht^{\al_1}$, $\tht^{\al_j}_{I_j}$ ($j\ne k$) by the Leibniz rule.
The grading in the filtration is preserved if none of them acts on
$A^{I_2\cdots I_s}_{\al_1\al_2\cdots\al_s}$ and is decreased if otherwise.
Therefore $I(\F^l\Om^{n,s}_0)\subset\F^l\Om^{n,s}_0$.\\
2. The induced map $\bar I$ on the graded component $\Gr^l\Om^{n,s}_0$ is
defined as
$\bar I\colon\om+\F^{l-1}\Om^{n,s}_0\mapsto I(\om)+\F^{l-1}\Om^{n,s}_0$
for all $\om\in\F^l\Om^{n,s}_0$.
By Part~1, the map $\bar I$ is well defined, and it is an idempotent because
$I$ is.
By projecting $I(\eta_A)\in\F^l\Om^{n,s}_0$ calculated above to the graded
component $\Gr^l\Om^{n,s}_0$, we obtain
\[ \bar I(\eta_A)=\mfrac1sA^{I_2\cdots I_s}_{\al_1\al_2\cdots\al_s}
 \Big(\tht^{\al_1}\wedge\tht^{\al_2}_{I_2}\wedge\cdots\wedge\tht^{\al_s}_{I_s}
 +\,\msum{k=2}s(-1)^{k-1}\tht^{\al_k}\wedge(-D)_{I_k}\big(\tht^{\al_1}\wedge
  \tht^{\al_2}_{I_2}\wedge\cdots\widehat{\tht^{\al_2}_{I_2}}
  \cdots\wedge\tht^{\al_s}_{I_s}\big)\Big)\wedge\nu,  \]
which clearly shows that $\bar I$ is $\cR^\infty$-linear.
\end{proof}

Since the coefficients $A^{I_2\cdots I_s}_{\al_1\al_2\cdots\al_s}$ appears
inside the derivatives on the right hand side of the formula of $I(\eta_A)$,
the map $I$, although being $\bR$-linear, is not an $\cR^\infty$-module
homomorphism.
Consequently, neither the kernel $\ker(I)=\im(\dr_H)$ nor the image
$\cF^s=I(\Om^{n,s})$ are necessarily $\cR^\infty$-submodules of $\Om^{n,s}_0$.
A notable exception is $\cF^1$, which happens to be an $\cR^\infty$-module
because it is equal to the whole space $\Om^{n,1}_0$, as explained above.

\section{Filtration on the space of functional forms and graded components}

For the space of functional $s$-forms $\cF^s=\cF^s(\J^\infty E)
=I(\Om^{n,s}(\J^\infty E))\subset\Om^{n,s}_0(\J^\infty E)$, $s\ge1$, defined
above, we introduce an ascending filtration
\[ 0=\F^{-1}\,\cF^s\subset\F^0\,\cF^s\subset\F^1\,\cF^s\subset\cdots\subset
   \F^\infty\,\cF^s=\cF^s \]
from that of $\Om^{n,s}_0$, by taking
\[  \F^l\,\cF^s:=\cF^s\cap\F^l\Om^{n,s}_0. \]
Like $\cF^s\subset\Om^{n,s}_0$, the filtered spaces $\F^l\,\cF^s$ are
$\bR$-linear subspaces of $\F^l\Om^{n,s}_0$ but are not
$\cR^\infty$-submodules.
But we will show that the graded components
$\Gr^l\,\cF^s=\mfrac{\F^l\,\cF^s}{\F^{l-1}\,\cF^s}$ are isomorphic to
$\cR^\infty$-submodules of $\Gr^l\Om^{n,s}_0$ (Theorem~\ref{thm:GrF}.2).

\begin{lemma}\label{lem:GrF}
The graded components $\Gr^l\,\cF^s$, $l\ge0$, can be naturally identified as
$\bR$-linear subspaces of $\Gr^l\Om^{n,s}_0$.
\end{lemma}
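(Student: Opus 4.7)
The plan is to exhibit a natural, well-defined $\bR$-linear map
\[ \Phi^l\colon\Gr^l\,\cF^s\longrightarrow\Gr^l\Om^{n,s}_0 \]
and verify that it is injective. Once this is done, $\Gr^l\,\cF^s$ may be identified with its image, which is an $\bR$-linear subspace of $\Gr^l\Om^{n,s}_0$.

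First I would define $\Phi^l$ on representatives: given a class $[\om]\in\Gr^l\,\cF^s$ with $\om\in\F^l\,\cF^s\subset\F^l\Om^{n,s}_0$, send it to the class of $\om$ in $\Gr^l\Om^{n,s}_0=\F^l\Om^{n,s}_0/\F^{l-1}\Om^{n,s}_0$. To show well-definedness, suppose $\om_1,\om_2\in\F^l\,\cF^s$ represent the same class, so that $\om_1-\om_2\in\F^{l-1}\,\cF^s=\cF^s\cap\F^{l-1}\Om^{n,s}_0\subset\F^{l-1}\Om^{n,s}_0$; hence their images in $\Gr^l\Om^{n,s}_0$ coincide. $\bR$-linearity is immediate since both quotients are $\bR$-linear and the assignment at the level of representatives is the inclusion $\F^l\,\cF^s\hookrightarrow\F^l\Om^{n,s}_0$.

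The only real content is injectivity. Suppose $[\om]\in\Gr^l\,\cF^s$ lies in $\ker\Phi^l$; then the representative $\om\in\F^l\,\cF^s$ already belongs to $\F^{l-1}\Om^{n,s}_0$. Since $\om\in\cF^s$ as well, we get
\[ \om\in\cF^s\cap\F^{l-1}\Om^{n,s}_0=\F^{l-1}\,\cF^s, \]
by the very definition of the induced filtration $\F^\bullet\,\cF^s$. Therefore $[\om]=0$ in $\Gr^l\,\cF^s$, proving injectivity.

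I do not anticipate any genuine obstacle here: the statement is the formal fact that, for any filtered space $B$ and any subspace $A\subset B$ filtered by $\F^l A:=A\cap\F^l B$, the inclusion $A\hookrightarrow B$ induces an injection on associated graded pieces. The only subtlety worth noting is that $\F^l\,\cF^s$ is \emph{not} an $\cR^\infty$-submodule of $\F^l\Om^{n,s}_0$, so the map $\Phi^l$ is a priori only $\bR$-linear; upgrading the image to an $\cR^\infty$-module (via its identification with $\im\bar I$) is the business of the subsequent Theorem~\ref{thm:GrF}.2 and is not needed here.
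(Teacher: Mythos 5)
Your proposal is correct and coincides with the paper's own (first) argument: the same map $\om+\F^{l-1}\,\cF^s\mapsto\om+\F^{l-1}\Om^{n,s}_0$, with injectivity following from $\cF^s\cap\F^{l-1}\Om^{n,s}_0=\F^{l-1}\,\cF^s$. The paper additionally phrases this via the second isomorphism theorem, but that is only a restatement of the same direct computation.
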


\begin{proof}
Directly, the map $\Gr^l\,\cF^s\to\Gr^l\Om^{n,s}_0$ is
$\om+\F^{l-1}\,\cF^s\mapsto\om+\F^{l-1}\Om^{n,s}_0$ for all
$\om\in\F^l\,\cF^s$.
It is clearly well defined and is injective because if
$\om+\F^{l-1}\Om^{n,s}_0$ is zero in $\Gr^l\Om^{n,s}_0$, then
$\om\in\F^{l-1}\Om^{n,s}_0$ and thus
$\om\in\cF^s\cap\F^{l-1}\Om^{n,s}_0=\F^{l-1}\,\cF^s$, i.e.,
$\om+\F^{l-1}\,\cF^s$ is zero in $\Gr^l\,\cF^s$.
A more formal explanation applies the group homomorphism theorem.
We have
\[ \Gr^l\cF^s=\mfrac{\cF^s\cap\F^l\Om^{n,s}_0}{\cF^s\cap\F^{l-1}\Om^{n,s}_0}
  =\mfrac{\cF^s\cap\F^l\Om^{n,s}_0}
   {(\cF^s\cap\F^l\Om^{n,s}_0)\cap\F^{l-1}\Om^{n,s}_0}
  \cong\mfrac{(\cF^s\cap\F^l\Om^{n,s}_0)+\F^{l-1}\Om^{n,s}_0}
  {\F^{l-1}\Om^{n,s}_0}. \]
Since $(\cF^s\cap\F^l\Om^{n,s}_0)+\F^{l-1}\Om^{n,s}_0\subset\F^l\Om^{n,s}_0$,
$\Gr^l\,\cF^s$ is naturally isomorphic to a subspace of
$\mfrac{\F^l\Om^{n,s}_0}{\F^{l-1}\Om^{n,s}_0}=\Gr^l\Om^{n,s}_0$.
\end{proof}

\begin{lemma}\label{lem:FI}
For all $l\ge0$, we have $\F^l\,\cF^s=I(\F^l\Om^{n,s}_0)$.
\end{lemma}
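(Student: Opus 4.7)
The plan is to establish the equality by proving both inclusions, each of which is a short consequence of the facts already set up in the paper, namely Proposition~\ref{pro:IbarI}.1, the idempotency $I^2=I$, and the inclusion $\cF^s\subset\Om^{n,s}_0$.

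For the inclusion $I(\F^l\Om^{n,s}_0)\subset\F^l\,\cF^s$, I would simply unpack the definition $\F^l\,\cF^s=\cF^s\cap\F^l\Om^{n,s}_0$. The containment in the first factor $\cF^s$ is automatic since $\F^l\Om^{n,s}_0\subset\Om^{n,s}$ and $\cF^s=I(\Om^{n,s})$ by definition. The containment in the second factor $\F^l\Om^{n,s}_0$ is exactly the content of Proposition~\ref{pro:IbarI}.1. So this direction requires no new work beyond citing what is already proved.

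For the reverse inclusion $\F^l\,\cF^s\subset I(\F^l\Om^{n,s}_0)$, I would take an arbitrary $\om\in\F^l\,\cF^s$. By definition, $\om\in\cF^s$, so $\om=I(\alpha)$ for some $\alpha\in\Om^{n,s}$; applying $I$ again and using $I^2=I$ yields $I(\om)=I^2(\alpha)=I(\alpha)=\om$. Also by definition, $\om\in\F^l\Om^{n,s}_0$. Therefore $\om=I(\om)$ with $\om\in\F^l\Om^{n,s}_0$, which is precisely to say $\om\in I(\F^l\Om^{n,s}_0)$.

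There is essentially no obstacle here; the lemma really is a direct combination of (i) the filtration-preservation statement Proposition~\ref{pro:IbarI}.1, (ii) the idempotency of $I$, and (iii) the fact that the image of $I$ sits inside $\Om^{n,s}_0$ (visible from the formula $I(\om)=\tfrac1s\tht^\al\wedge\sum_{|I|\ge 0}(-D)_I(\pdr^I_\al\lrcorner\,\om)$, in which the first factor is always the undifferentiated contact form $\tht^\al$). The only point worth stressing in the write-up is why one may restrict attention to $\F^l\Om^{n,s}_0$ rather than $\F^l\Om^{n,s}$ when representing elements of $\F^l\,\cF^s$ as images under $I$; the key is that any $\om$ already in $\cF^s$ is its own $I$-image and already lies in $\Om^{n,s}_0$, so no further preprocessing is needed.
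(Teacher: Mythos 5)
Your proof is correct and follows essentially the same two-inclusion argument as the paper: the forward inclusion from Proposition~\ref{pro:IbarI}.1 together with $I(\F^l\Om^{n,s}_0)\subset\cF^s$, and the reverse inclusion from the idempotency $\om=I(\om)$ for $\om\in\cF^s$ combined with $\om\in\F^l\Om^{n,s}_0$. No issues.
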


\begin{proof}
First, by $I(\F^l\Om^{n,s}_0)\subset\F^l\Om^{n,s}_0$ and
$I(\F^l\Om^{n,s}_0)\subset I(\Om^{n,s}_0)=\cF^s$, we obtain
$I(\F^l\Om^{n,s}_0)\subset\F^l\Om^{n,s}_0\cap\cF^s=\F^l\,\cF^s$.
Conversely, if $\om\in\F^l\,\cF^s\subset\F^l\Om^{n,s}_0$, then since
$\om\in\cF^s$, we have $\om=I(\om)\in I(\F^l\Om^{n,s}_0)$.
\end{proof}

Lemma~\ref{lem:FI} implies that the filtration $\F^l\,\cF^s$ is controlled by
the restriction of $I$ to finite jet-order data.
The following is a refinement of Lemma~\ref{lem:GrF}, which equips
$\Gr^l\,\cF^s$ with $\cR^\infty$-module structures.

\begin{theorem}\label{thm:GrF}
$\Gr^l\,\cF^s$ are naturally $\bR$-linearly isomorphic to
$\Gr^l\,\cF^s\cong\bar I_l(\Gr^l\Om^{n,s}_0)$, which are $\cR^\infty$-modules.
\end{theorem}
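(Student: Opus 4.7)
The plan is to leverage the three preparatory results in sequence: Lemma~\ref{lem:GrF} embeds $\Gr^l\,\cF^s$ as an $\bR$-linear subspace of $\Gr^l\Om^{n,s}_0$; Lemma~\ref{lem:FI} identifies $\F^l\,\cF^s$ with $I(\F^l\Om^{n,s}_0)$; and Proposition~\ref{pro:IbarI}.2 gives the graded map $\bar I_l\colon\Gr^l\Om^{n,s}_0\to\Gr^l\Om^{n,s}_0$ the structure of an idempotent $\cR^\infty$-module homomorphism. Combined, they should identify the subspace of Lemma~\ref{lem:GrF} with the image of $\bar I_l$, at which point the $\cR^\infty$-module structure comes for free from $\cR^\infty$-linearity of $\bar I_l$.

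Concretely, I would first note the natural injection $\Gr^l\,\cF^s\hookrightarrow\Gr^l\Om^{n,s}_0$ given by $\om+\F^{l-1}\,\cF^s\mapsto\om+\F^{l-1}\Om^{n,s}_0$, already established in Lemma~\ref{lem:GrF}. The task is then to check that the image of this injection is exactly $\bar I_l(\Gr^l\Om^{n,s}_0)$. For the inclusion $\subset$, given $\om\in\F^l\,\cF^s\subset\cF^s$, idempotency of $I$ gives $\om=I(\om)$, so the class of $\om$ in $\Gr^l\Om^{n,s}_0$ equals $\bar I_l(\om+\F^{l-1}\Om^{n,s}_0)$ and hence lies in $\bar I_l(\Gr^l\Om^{n,s}_0)$. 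For the reverse inclusion $\supset$, pick any $\eta\in\F^l\Om^{n,s}_0$; by Lemma~\ref{lem:FI}, $I(\eta)\in I(\F^l\Om^{n,s}_0)=\F^l\,\cF^s$, so $\bar I_l(\eta+\F^{l-1}\Om^{n,s}_0)=I(\eta)+\F^{l-1}\Om^{n,s}_0$ is precisely the image under the Lemma~\ref{lem:GrF} embedding of $I(\eta)+\F^{l-1}\,\cF^s\in\Gr^l\,\cF^s$.

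Finally, since $\bar I_l$ is $\cR^\infty$-linear by Proposition~\ref{pro:IbarI}.2, its image $\bar I_l(\Gr^l\Om^{n,s}_0)$ is an $\cR^\infty$-submodule of the $\cR^\infty$-module $\Gr^l\Om^{n,s}_0$, and transporting this structure through the $\bR$-linear isomorphism just constructed endows $\Gr^l\,\cF^s$ with the desired $\cR^\infty$-module structure.

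There is no substantive obstacle here; the theorem is essentially bookkeeping once the earlier results are in hand. The only subtle point is making sure that passing between $\F^l\,\cF^s$ and $I(\F^l\Om^{n,s}_0)$ is compatible with quotienting by the $(l-1)$-level, and this is exactly what Lemma~\ref{lem:FI} guarantees (applied at both level $l$ and level $l-1$). Hence well-definedness of the map $[I(\eta)]\mapsto I(\eta)+\F^{l-1}\,\cF^s$ reduces to the identity $\cF^s\cap\F^{l-1}\Om^{n,s}_0=\F^{l-1}\,\cF^s$, which is the very definition of the filtration on $\cF^s$.
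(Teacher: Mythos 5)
Your proof is correct, and it is genuinely different from the one in the paper. You take the natural injection $\Gr^l\,\cF^s\hookrightarrow\Gr^l\Om^{n,s}_0$ of Lemma~\ref{lem:GrF} and simply compute its image: the inclusion into $\im(\bar I_l)$ follows from $\om=I(\om)$ for $\om\in\cF^s$ (idempotency) together with the defining formula $\bar I_l([\om])=[I(\om)]$, and the reverse inclusion follows from Lemma~\ref{lem:FI}, since $\bar I_l([\eta])=[I(\eta)]$ with $I(\eta)\in I(\F^l\Om^{n,s}_0)=\F^l\,\cF^s$. Both steps check out, and the $\cR^\infty$-module structure then transports through the isomorphism exactly as you say. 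The paper instead writes $I_l$ in block-triangular form with respect to a splitting $\F^l\Om^{n,s}_0=\F^{l-1}\Om^{n,s}_0\oplus\Gr^l\Om^{n,s}_0$, introduces the off-diagonal operator $B_l$, derives $I_{l-1}B_l+B_l\bar I_l=B_l$ from $I_l^2=I_l$, and identifies $\Gr^l\,\cF^s$ with the graph $\Gam_l$ of $B_l$ restricted to $\im(\bar I_l)$. Your route is shorter and proves the theorem as stated with less machinery; what the paper's heavier argument buys is the explicit internal direct-sum splitting $\F^l\,\cF^s=\F^{l-1}\,\cF^s\oplus\Gam_l$ of the filtration itself (not merely of its graded quotients), which is precisely what Corollary~\ref{cor:GrF} uses to decompose an arbitrary $\om\in\F^l\,\cF^s$ uniquely as $\om_l+\cdots+\om_0$ with $\om_k\in\Gam_k$. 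If you wanted to recover that corollary from your argument, you would still need to exhibit a section of $\F^l\,\cF^s\to\Gr^l\,\cF^s$, e.g.\ $\eta\mapsto I(\eta)$ for $\eta\in\im(\bar I_l)$, which essentially reconstructs the graph $\Gam_l$.
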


\begin{proof}
For clarity, for each $l\ge0$, we denote by $I_l$ the restriction of $I$ to
$\F^l\Om^{n,s}_0$ and by $\bar I_l$ the induced operator on $\Gr^l\Om^{n,s}_0$.
Then under the splitting of $\cR^\infty$-modules
$\F^l\Om^{n,s}_0=\F^{l-1}\Om^{n,s}_0\oplus\Gr^l\Om^{n,s}_0$, $I_l$ assumes
the block-matrix form
\[ I_l=\begin{pmatrix} I_{l-1} & B_l\\ 0 & \bar I_l \end{pmatrix}  \]
for some $\bR$-linear operator
$B_l\colon\Gr^l\Om^{n,s}_0\to\F^{l-1}\Om^{n,s}_0$.
That is, for $\om\in\F^{l-1}\Om^{n,s}_0$ and $\eta\in\Gr^l\Om^{n,s}_0$ such
that $(\om,\eta)\in\F^l\Om^{n,s}_0$, we have
$I_l(\om,\eta)=(I_{l-1}\om+B_l\eta,\bar I_l\eta)$.
Typically, $B_l$ is not $\cR^\infty$-linear.
By using $I_l^2=I_l$, we get an identity $I_{l-1}B_l+B_l\bar I_l=B_l$ of linear
operators, which is equivalent to either of the following two identities:
\[ B_l\,(\id-\bar I_l)=I_{l-1}B_l,\qquad\qquad\qquad
   B_l\,\bar I_l=(\id-I_{l-1})\,B_l. \]
Since $\bar I_l$ is a projection operator on $\Gr^l\Om^{n,s}_0$ and is
$\cR^\infty$-linear (Proposition~\ref{pro:IbarI}.2), we have an
$\cR^\infty$-module decomposition
\[ \Gr^l\Om^{n,s}_0=\im(\id-\bar I_l)\oplus\im(\bar I_l).  \]
If $\om\in\im(\id-\bar I_l)$, then $\om=(\id-\bar I_l)\,\om$ and
$B_l\,\om=B_l\,(\id-\bar I_l)\,\om=I_{l-1}\,B_l\,\om\in\im(I_{l-1})$.
That is, we get $B_l(\im(\id-\bar I_l))\subset\im(I_{l-1})$.
On the other hand, if $\om\in\im(\bar I_l)$, then $\om=\bar I_l\,\om$ and
$B_l\,\om=B_l\,\bar I_l\,\om=(\id-I_{l-1})\,B_l\,\om\in\im(\id-I_{l-1})$.
Therefore $B_l(\im(\bar I_l))\cap\im(I_{l-1})=0$ and hence
$\Gam_l\cap\im(I_{l-1})=0$, where
$\Gam_l=\{(B_l\,\om,\om):\om\in\im(\bar I_l)\}$ is the graph of the operator
$B_l$ restricted to $\im(\bar I_l)$.
Consequently, by Lemma~\ref{lem:FI}, we obtain
\[ \F^l\,\cF^s=I_l(\F^l\Om^{n,s}_0)
   =I_{l-1}(\F^{l-1}\Om^{n,s}_0)+B_l(\im(\id-\bar I_l))+\Gam_l
   =\F^{l-1}\,\cF^s\oplus\Gam_l  \]
and the desired isomorphism (as $\bR$-vector spaces)
\[ \Gr^l\cF^s=\mfrac{\F^l\,\cF^s}{\F^{l-1}\,\cF^s}
   \cong\Gam_l\cong\im(\bar I_l), \]
where the last isomorphism is the projection
$\Gam_l\subset\im(\id-I_{l-1})\oplus\im(\bar I_l)\to\im(\bar I_l)$.
\end{proof}

\begin{corollary}\label{cor:GrF}
Fix $l\ge0$ and for each $k\ge l$, let $\Gam_k$ be the graph of $B_k$ on
$\im(\bar I_k)$ in the proof of Theorem~\ref{thm:GrF}. 
Then
\[ \F^l\,\cF^s=\Gam_l\oplus\Gam_{l-1}\oplus\cdots\oplus\Gam_1\oplus\Gam_0 \]
as $\bR$-vector spaces.
Consequently, each $\om\in\F^l\,\cF^s$ can be uniquely written as
$\om=\om_l+\om_{l-1}+\cdots+\om_1+\om_0$, where
$\om_k=(B_k\eta_k,\eta_k)\in\Gam_k$, $\eta_k\in\bar I_l(\Gr^l\Om^{n,s}_0)$,
$0\le k\le l$.
The condition $\om=0$ is equivalent to $\om_k=0$ for all $0\le k\le l$
as well as to $\eta_k=0$ for all $0\le k\le l$.
\end{corollary}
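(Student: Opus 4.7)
The plan is to iterate the splitting $\F^l\,\cF^s=\F^{l-1}\,\cF^s\oplus\Gam_l$ established inside the proof of Theorem~\ref{thm:GrF} down to a base case, and then read off the equivalent vanishing conditions from the resulting direct sum.

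First I would induct on $l$. The base case is $l=0$: since $\F^{-1}\,\cF^s=0$, the identity from Theorem~\ref{thm:GrF} specialises to $\F^0\,\cF^s=\Gam_0$, matching the claimed decomposition. For the inductive step, assume $\F^{l-1}\,\cF^s=\Gam_{l-1}\oplus\cdots\oplus\Gam_0$; combining this with $\F^l\,\cF^s=\F^{l-1}\,\cF^s\oplus\Gam_l$ yields
\[ \F^l\,\cF^s=\Gam_l\oplus\Gam_{l-1}\oplus\cdots\oplus\Gam_0 \]
as $\bR$-vector spaces. The one point that deserves a line of justification is that the $\Gam_k$ obtained when the theorem is applied to the subspace $\F^k\,\cF^s$ agrees with the $\Gam_k$ named in the corollary. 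This holds because $\Gam_k$ is defined intrinsically as the graph of $B_k$ on $\im(\bar I_k)$, and the operators $\bar I_k,B_k$ are determined only by the block structure of $I_k$ on the fixed $\cR^\infty$-submodule $\F^k\Om^{n,s}_0\subset\F^l\Om^{n,s}_0$.

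Given the direct sum, the unique decomposition $\om=\om_l+\cdots+\om_0$ with $\om_k\in\Gam_k$ is automatic, and the presentation $\om_k=(B_k\eta_k,\eta_k)$ with $\eta_k\in\im(\bar I_k)$ is simply the definition of the graph $\Gam_k$. The equivalence $\om=0\Leftrightarrow\om_k=0$ for all $k$ is uniqueness of direct sum decompositions, while $\om_k=0\Leftrightarrow\eta_k=0$ follows because the map $\Gam_k\to\im(\bar I_k)$, $(B_k\eta,\eta)\mapsto\eta$, is an $\bR$-linear isomorphism---precisely the isomorphism used to close the proof of Theorem~\ref{thm:GrF}.

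There is essentially no hard step: the corollary is bookkeeping on top of Theorem~\ref{thm:GrF}, where the real work of constructing the operators $B_k$ and verifying $\Gam_k\cap\im(I_{k-1})=0$ has already been done. The only delicate matter is the compatibility of the iterated splittings noted above, and this is forced by the intrinsic definitions of $B_k$ and $\bar I_k$ in terms of the ambient filtration.
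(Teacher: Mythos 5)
Your proposal is correct and follows essentially the same route as the paper: the paper's own proof simply iterates the splitting $\F^l\,\cF^s=\F^{l-1}\,\cF^s\oplus\Gam_l$ from Theorem~\ref{thm:GrF} inductively and declares the rest straightforward, which is exactly what you have done, only with the compatibility of the iterated splittings and the graph isomorphism $\Gam_k\cong\im(\bar I_k)$ spelled out explicitly. (Your reading $\eta_k\in\im(\bar I_k)$ is the intended one; the statement's ``$\bar I_l(\Gr^l\Om^{n,s}_0)$'' is evidently a typo for $\bar I_k(\Gr^k\Om^{n,s}_0)$.)
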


\begin{proof}
In the proof of Theorem~\ref{thm:GrF}, we have
$\F^l\,\cF^s=\F^{l-1}\,\cF^s\oplus\Gam_l$.
The decomposition
$\,\F^l\,\cF^s=\Gam_l\oplus\Gam_{l-1}\oplus\cdots\oplus\Gam_1\oplus\Gam_0$
follows inductively, and the rest is straightforward.
\end{proof}

When $s=2$, the decomposition of $\om$ as a sum of $\om_l,\dots,\om_1,\om_0$
was obtained in \cite[Proposition~3.6]{An}.
The result of Corollary~\ref{cor:GrF} holds for general $s\ge1$.
Furthermore, the advantage of $\eta_k=0$ over $\om_l=0$ is that $\eta_k$,
$0\le k\le l$, are in $\cR^\infty$-modules, and the vanishing of $\eta_k$
is conveniently achieved by setting to zero of coefficients of an
$\cR^\infty$-module basis.

\section{Two examples: the case $s=2$, $l\ge0$ arbitrary and the case
$l=1$, $s\ge0$ arbitrary}

For $s=2$, we have
\[ \Gr^l\Om^{n,2}_0
   =\Span_{\cR^\infty}\{\tht^\al\wedge\tht_I^\beta\wedge\nu:|I|=l\}. \]
An element in $\Gr^l\Om^{n,2}_0$ is of the form
$A^I_{\al\beta}\tht^\al\wedge\tht^\beta_I\wedge\nu$ where the multi-index $I$
with $|I|=l$ is summed.

\begin{corollary}\label{cor:s=2}
The induced map $\bar I_l:\Gr^l\Om^{n,2}_0\to\Gr^l\Om^{n,2}_0$ acts by
\[ A^I_{\al\beta}\tht^\al\wedge\tht_I^\beta\wedge\nu\longmapsto
   \mfrac12\big(A^I_{\al\beta}+(-1)^{l+1}A^I_{\beta\al}\big)
   \,\tht^\al\wedge\tht_I^\beta\wedge\nu. \]
Consequently,
\[  \im(\bar I_l)=\big\{A^I_{\al\beta}\tht^\al\wedge\tht_I^\beta\wedge\nu
    :A^I_{\al\beta}=(-1)^{l+1}A^I_{\beta\al}\big\},\qquad
    \ker(\bar I_l)=\big\{A^I_{\al\beta}\tht^\al\wedge\tht_I^\beta\wedge\nu
    :A^I_{\al\beta}=(-1)^lA^I_{\beta\al}\big\}.     \]
\end{corollary}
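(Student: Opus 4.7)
The plan is to specialise the general formula for $\bar I$ from the proof of Proposition~\ref{pro:IbarI}.2 to $s=2$, and then extract $\im(\bar I_l)$ and $\ker(\bar I_l)$ from the idempotency of $\bar I_l$. A generic element of $\Gr^l\Om^{n,2}_0$ has the form $\eta_A=A^I_{\al\beta}\,\tht^\al\wedge\tht^\beta_I\wedge\nu$ with $|I|=l$. Substituting $s=2$, $\al_1=\al$, $\al_2=\beta$, $I_2=I$ into the formula for $\bar I(\eta_A)$ collapses the inner sum to a single term ($k=2$), yielding
\[ \bar I_l(\eta_A)=\mfrac12\,A^I_{\al\beta}\bigl(\tht^\al\wedge\tht^\beta_I-\tht^\beta\wedge(-D)_I\tht^\al\bigr)\wedge\nu. \]

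Next, I would invoke the standard identity $D_I\tht^\al=\tht^\al_I$ on contact forms, so that $(-D)_I\tht^\al=(-1)^l\tht^\al_I$. The second term in the displayed expression then becomes a multiple of $\tht^\beta\wedge\tht^\al_I\wedge\nu$, which upon relabelling the dummy summation indices $\al\leftrightarrow\beta$ joins the first term as a multiple of the same basis element $\tht^\al\wedge\tht^\beta_I\wedge\nu$. Collecting coefficients produces $\mfrac12\bigl(A^I_{\al\beta}+(-1)^{l+1}A^I_{\beta\al}\bigr)$, which is precisely the formula claimed for $\bar I_l$.

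For the description of image and kernel, idempotency of $\bar I_l$ from Proposition~\ref{pro:IbarI}.2 gives $\im(\bar I_l)=\{\om:\bar I_l\om=\om\}$ and $\ker(\bar I_l)=\{\om:\bar I_l\om=0\}$. Since $\{\tht^\al\wedge\tht^\beta_I\wedge\nu:|I|=l\}$ is an $\cR^\infty$-module basis of $\Gr^l\Om^{n,2}_0$, comparing coefficients in each equation reduces the problem to linear conditions on $A^I_{\al\beta}$. The equation $\bar I_l(\eta_A)=\eta_A$ gives $A^I_{\al\beta}=(-1)^{l+1}A^I_{\beta\al}$ and $\bar I_l(\eta_A)=0$ gives $A^I_{\al\beta}=(-1)^lA^I_{\beta\al}$, as asserted.

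The main point requiring care is sign bookkeeping in the second step: one has to combine the sign $(-1)^{k-1}=-1$ at $k=2$, the factor $(-1)^l$ coming from $(-D)_I$, and the permutation of the dummy indices $\al,\beta$ in the basis expansion, and verify that they assemble to the single factor $(-1)^{l+1}$. Once this verification is done correctly, the rest is a routine specialisation of Proposition~\ref{pro:IbarI} followed by comparison of coefficients in a module basis.
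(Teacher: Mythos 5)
Your proposal is correct and follows the same route as the paper: specialise the formula for $\bar I$ from the proof of Proposition~\ref{pro:IbarI}.2 to $s=2$ (using $(-D)_I\tht^\al=(-1)^l\tht^\al_I$ and relabelling $\al\leftrightarrow\beta$), then read off the image as the fixed-point set and the kernel by comparing coefficients in the module basis. The sign bookkeeping you describe assembles exactly as claimed, and your write-up in fact supplies more detail than the paper's one-line proof.
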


\begin{proof}
The map is the special case when $s=2$ of the formula for $\bar I$ in the
proof of Proposition~\ref{pro:IbarI}.2.
The results on its image and kernel are straightforward.
\end{proof}

Although well known, we use Corollary~\ref{cor:s=2} to obtain a necessary
condition that given a set of $\Del_\al\in\Om^{n,0}$, the equations
$\Del_\al=0$ are the Euler-Lagrange equations of some Lagrangian
$\lam\in\Om^{n,0}$.
For this to happen, $\Del:=\tht^\al\wedge\Del_\al$ is $\del_V\lam$ and must
satisfy $\del_V\Del=0$ in $\cF^2$.
We assume that all $\Del_\al\in\F^l\Om^{n,0}$ for some $l\ge1$.
We calculate
\[ \del_V\Del=I(\dr_V\Del)=\mfrac12\,\tht^\al\wedge\msum{0\le|I|\le l}{}\,
   \Big(\tht^\beta_I\wedge\pdr_\beta^I\Del_\al-(-D)_I(\tht^\beta\wedge
   \pdr_\al^I\Del_\beta)\Big). \]
The vanishing of $\del_V\Del$ is equivalent to that of all its graded
components, or $\mu_k=0$ for all $0\le k\le l$ as in Corollary~\ref{cor:GrF}.
For $k=l$, the only contributions to $\mu_l$ are from $I$ with $|I|=l$ in
the sum, and we have simply
\[  \mu_l=\mfrac12\,\msum{|I|=l}{}\,\tht^\al\wedge\tht^\beta_I\wedge
    (\pdr_\beta^I\Del_\al+(-1)^l\pdr_\al^I\Del_\beta)\in\Gr^l\Om^{n,2}.   \]
For $k=l-1$, there are contributions to $\mu_{l-1}$ from $|I|=l,l-1$ in the
sum.
For general $k$, the contributions to $\mu_k$ are from $|I|=l,l-1,\dots,l-k$.
We can rewrite the second term in the sum using higher Euler derivatives so
that
\[  \del_V\Del=\mfrac12\,\tht^\al\wedge\msum{0\le|I|\le l}{}\Big(\tht^\beta_I
    \wedge\big(\pdr_\beta^I\Del_\al-(-1)^{|I|}E_\al^I(\Del_\beta)\big)\Big).\]
Then for each $0\le k\le l$, we have
\[  \mu_k=\mfrac12\,\tht^\al\wedge\msum{|I|=k}{}\Big(\tht^\beta_I\wedge\big(
    \pdr_\beta^I\Del_\al-(-1)^{|I|}E_\al^I(\Del_\beta)\big)\Big)
    \in\Gr^k\Om^{n,2}. \]
Since $\tht^\al\wedge\tht^\beta_I\wedge\nu$, $|I|=k$, form a basis of the
$\cR^\infty$-module $\Gr^k\Om^{n,2}$, the vanishing of $\mu_k$ gives the
Helmholtz condition
\[   \pdr_\beta^I\Del_\al=(-1)^kE_\al^I(\Del_\beta),\qquad 0\le k\le l, \]
originally studied for the case $l=2$ \cite{He}.

For $s\ge3$, the map $\bar I_l$ in the proof of Proposition~\ref{pro:IbarI} is
quite explicit but involves sophisticated combinatorics if $l\ge2$.
We consider the special case $l=1$ and arbitrary $s\ge1$ as an illustration
of what the general result can be.

\begin{proposition}
Write a general element of $\Gr^1\Om^{n,s}\subset\F^l\Om^{n,s}$ as
\[  \eta_A=A^i_{\al_1\cdots\al_{s-1}\al_s}\tht^{\al_1}\wedge\cdots\wedge
    \tht^{\al_{s-1}}\wedge\tht^{\al_s}_i\wedge\nu, \]
where the coefficients $A^i_{\al_1\cdots\al_{s-1}\al_s}\in\cR^\infty$ are
totally antisymmetric in $\al_1,\dots,\al_{s-1}$.
Then
\begin{gather*}
\im(\bar I_1)=\Big\{\eta_A:\msum{k=0}{s-1}\,
 (-1)^{(s-1)k}A^i_{\al_{k+1}\cdots\al_s\al_1\cdots\al_{k-1}\al_k}=0\Big\}, \\
\ker(\bar I_1)=\Big\{\eta_A:sA^i_{\al_1\cdots\al_{s-1}\al_s}=\msum{k=0}{s-1}
 \,(-1)^{(s-1)k}A^i_{\al_{k+1}\cdots\al_s\al_1\cdots\al_{k-1}\al_k}\Big\}.
\end{gather*}
\end{proposition}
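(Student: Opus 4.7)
The plan is to compute $\bar I(\eta_A)$ by specialising the formula for $\bar I$ from the proof of Proposition~\ref{pro:IbarI}.2 to the case $l=1$, with $I_s=(i)$ and $I_2=\cdots=I_{s-1}=\emptyset$, and then to match the result with a combination of cyclic shifts of $A$. Two kinds of contributions appear. The ``identity'' contributions, coming from the leading term together with $k=2,\dots,s-1$ in the sum, each reduce to a copy of $\eta_A$ after anticommuting $\tht^{\al_k}$ back to its natural position inside $\tht^{\al_1}\wedge\cdots\wedge\tht^{\al_{s-1}}$, and together give $\mfrac{s-1}{s}\,\eta_A$. The remaining $k=s$ contribution involves $-D_i$ acting by Leibniz on $\tht^{\al_1}\wedge\cdots\wedge\tht^{\al_{s-1}}$ and produces $s-1$ summands; after moving each differentiated $\tht^{\al_j}_i$ to the last position and then using the antisymmetry of $A^i_{\al_1\cdots\al_s}$ in its first $s-1$ indices to relabel, I expect all $s-1$ summands to collapse to a single cyclically shifted expression, yielding
\[ \bar I(\eta_A)=\mfrac{s-1}{s}\bigl[A^i_{\al_1\cdots\al_s}+(-1)^s\,A^i_{\al_2\al_3\cdots\al_s\al_1}\bigr]\,\tht^{\al_1}\wedge\cdots\wedge\tht^{\al_{s-1}}\wedge\tht^{\al_s}_i\wedge\nu. \]

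Since the coefficient of the $\cR^\infty$-module basis element is only defined up to antisymmetry in $\al_1,\dots,\al_{s-1}$, the next step is to antisymmetrise. Writing $\tau$ for the cyclic shift $(\tau A)^i_{\al_1\cdots\al_s}=A^i_{\al_2\cdots\al_s\al_1}$, $\mathrm{Alt}$ for antisymmetrisation in the first $s-1$ arguments, and $P=\sum_{k=0}^{s-1}(-1)^{(s-1)k}\tau^k$, the combinatorial key is the identity
\[ \mathrm{Alt}(\tau^k A)=(-1)^{(s-1)(k-1)}\,\mathrm{Alt}(\tau A),\qquad k=1,\dots,s-1, \]
which I would prove by re-indexing the antisymmetrisation sum via a cyclic shift $c_k\in S_{s-1}$ whose sign $(-1)^{(k-1)(s-2)}$ comes from its cycle structure, combined with $k-1$ adjacent transpositions that return the $s$-th argument of $A$ to its canonical slot. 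A separate check on the generating transpositions $(p,p{+}1)$ of $\{1,\dots,s-1\}$ --- splitting the special cases $k=p,p{+}1$ (which exchange with each other, up to a sign $(-1)^s$) from the remaining $k$ (each of which acquires $-1$) --- shows that $P(A)$ itself is antisymmetric in $\al_1,\dots,\al_{s-1}$. Summing the displayed identity with weights $(-1)^{(s-1)k}$ then yields $P(A)=A+(s-1)(-1)^{s-1}\mathrm{Alt}(\tau A)$.

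Substituting $\mathrm{Alt}(\tau A)=\mfrac{(-1)^{s-1}}{s-1}\bigl(P(A)-A\bigr)$ back into the first display collapses $\bar I(\eta_A)$ to $\eta_A-\mfrac1s\,\eta_{P(A)}$. Since $\bar I_1$ is idempotent by Proposition~\ref{pro:IbarI}, membership in the image is equivalent to $\bar I(\eta_A)=\eta_A$, i.e., $P(A)=0$, and membership in the kernel is equivalent to $\bar I(\eta_A)=0$, i.e., $P(A)=sA$, recovering the two stated conditions. The hard part will be the two sign-intensive bookkeeping steps of the middle paragraph --- the cyclic-shift identity for $\mathrm{Alt}(\tau^k A)$ and the $S_{s-1}$-antisymmetry of $P(A)$. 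Neither is deep, but careful handling of permutation signs and of the movement of the $s$-th argument of $A$ between positions will be essential.
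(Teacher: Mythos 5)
Your proposal is correct and follows essentially the same route as the paper: compute $I_1(\eta_A)$ by the Leibniz rule, project to the graded component, and use the antisymmetry of $A^i_{\al_1\cdots\al_{s-1}\al_s}$ in its first $s-1$ indices to convert the $s-1$ differentiated terms into signed cyclic shifts, arriving at $\bar I_1(\eta_A)=\eta_A-\mfrac1s\,\eta_{P(A)}$ with $P=\sum_{k=0}^{s-1}(-1)^{(s-1)k}\tau^k$. The only differences are organisational: the paper reads off the image and kernel by recognising the coefficient map as the rank-one orthogonal projection $I_s-cc^{\mathrm{T}}$ acting on the vector of cyclic shifts, whereas you use the idempotency of $\bar I_1$ from Proposition~\ref{pro:IbarI}, and you make explicit the check (left implicit in the paper, but needed for the stated conditions on the raw coefficients to be well posed) that $P(A)$ is itself antisymmetric in $\al_1,\dots,\al_{s-1}$.
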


\begin{proof}
Using the definition of $I$, we calculate
\[ I_1(\eta_A)
   =\mfrac{s-1}s\eta_A-\mfrac1sD_i\big(A^i_{\al_1\cdots\al_{s-1}\al_s}
   \tht^{\al_1}\wedge\cdots\wedge\tht^{\al_{s-1}}\big)\wedge\tht^{\al_s}
   \wedge\nu,  \]
\begin{align*}
\bar I_1(\eta_A)&=
\Big(\mfrac{s-1}sA^i_{\al_1\cdots\al_{s-1}\al_s}+\mfrac1s\,\msum{k=1}{s-1}\,
 A^i_{\al_1\cdots\al_{k-1}\al_s\al_{k+1}\cdots\al_{s-1}\al_k}\Big)\,
 \tht^{\al_1}\wedge\cdots\wedge\tht^{\al_{s-1}}\wedge\tht^{\al_s}_i\wedge\nu \\
&=\Big(A^i_{\al_1\cdots\al_{s-1}\al_s}-\mfrac1s\,\msum{k=0}{s-1}\,
 (-1)^{(s-1)k}A^i_{\al_{k+1}\cdots\al_s\al_1\cdots\al_{k-1}\al_k}\Big)\,
 \tht^{\al_1}\wedge\cdots\wedge\tht^{\al_{s-1}}\wedge\tht^{\al_s}_i\wedge\nu.
\end{align*}
Here we used the total antisymmetry of $A^i_{\al_1\cdots\al_{s-1}\al_s}$
in the first $s-1$ indices to rewrite each term so that the indices are
cyclic permutations of $\al_1,\dots,\al_{s-1},\al_s$.
It is instructive to verify that $\bar I_1$ calculated here is indeed a
projection operator.
Put two column $s$-vectors: a unit vector
$c=(c_0,c_1,\dots,c_{s-1})^\mathrm{T}$, where $c_k=(-1)^{(s-1)k}/\sqrt s$, and
\[ a=(A^i_{\al_1\cdots\al_{s-1}\al_s},A^i_{\al_2\cdots\al_s\al_1},\dots,
     A^i_{\al_s\al_1\cdots\al_{s-1}})^\mathrm{T}.  \]
Then the above formula for $\bar I_1$ can be written as
$\bar I_1(a)=(I_s-cc^\mathrm{T})\,a$, where $I_s-cc^\mathrm{T}$ is clearly a
projection onto the hyperplane perpendicular to $c$.
The results on the image and kernel of $\bar I_1$ then follow.
\end{proof}

For small $s$, when $s=1$, $\bar I_1=0$.
When $s=2$, we recover the result of Corollary~\ref{cor:s=2} when $l=1$.
The first new case is when $s=3$.
For
$\eta_A=A^i_{\al\beta\gam}\tht^\al\wedge\tht^\beta\wedge\tht^\gam_i\wedge\nu$,
where $A^i_{\al\beta\gam}=-A^i_{\beta\al\gam}\in\cR^\infty$, we have
\[ \im(\bar I_1)=\{\eta_A:A^i_{\al\beta\gam}+A^i_{\gam\al\beta}
   +A^i_{\beta\gam\al}=0\},\qquad\qquad
   \ker(\bar I_1)=\{\eta_A:2A^i_{\al\beta\gam}=A^i_{\gam\al\beta}
   +A^i_{\beta\gam\al}\}. \]

\section{Conclusions and outlook}

In this work, we defined a filtration on the variational bicomplex according
to jet order, which is preserved by the interior Euler operator.
Furthermore, the space of functional forms of any fixed degree inherits
a filtration.
Though the filtered subspaces are not submodules, the graded components are
isomorphic to linear spaces which do have module structures.
In this way, the condition that a functional form vanishes can be stated
concisely using a module basis.
In a continuation of this work, we will consider cases with group symmetries
as well as the inverse of Noether's theorems using this new filtration.

\bigskip\noindent {\sc Acknowledgement.}
The work of S.W. is supported in part by the NSTC (Taiwan) grant
No.\;114-2115-M-007-003 and by the funding of the Mathematical Sciences
Institute Distinguished Research Visitor Program (MSRVP) of the Australian
National University.
He thanks the ANU, members of the MSI, and especially P.~Bouwknegt, for
hospitality.\\

\bibliographystyle{plain}

\begin{thebibliography}{9}

\bibitem{An}
I.\,M.\;Anderson,
The variational bicomplex, unpublished book available at
{\tt https://digitalcommons.usu.edu/mathsci\_facpub/32/}

\bibitem{BB}
G.\;Barnich and F.\ Brandt,
Covariant theory of asymptotic symmetries, conservation laws and central
charges, Nucl.\ Phys.\ B633 (2002) 3--82

\bibitem{BGG}
R.\;Bryant, P.\;Griffiths and D.\;Grossman,
Exterior differential systems and Euler-Lagrange partial differential equations, Univ.\ Chicago Press (2003)

\bibitem{He}
H.\,von\;Helmholtz, 
\"Uber die physikalische Bedeutung des Princips der Kleinsten Wirkung,
J.\ reine angew.\ Math.\ 100 (1887) 137--166

\bibitem{HT}
M.\;Henneaux and C.\;Teitelboim,
Quantization of gauge systems,
Princeton Univ.\ Press, Princeton, NJ (1994)

\bibitem{Ol}
P.\,J.\;Olver,
Applications of Lie groups to differential equations, 2nd ed.,
Grad.\ Text Math., 107, Springer, New York (1993)

\bibitem{Ta}
F.\;Takens,
A global version of the inverse problem of the calculus of variations,
J.\ Diff.\ Geom.\ 14 (1979) 543--562

\bibitem{Ts}
T.\;Tsujishita,
On variation bicomplexes associated to differential equations,
Osaka J.\ Math.\ 19 (1982) 311--363

\bibitem{Tu}
W.\,M.\;Tulczyjew,
The Euler-Lagrange resolution,
in: Proc.\ Int.\ Colloq.\ CNRS, Aix-en-Provence, September, 1979, Part~I,
Lect.\ Notes Math.\ 836, pp.~22-–48, eds.\ J.\,M.\;Souriau, Springer-Verlag
(1980)

\bibitem{Vi}
A.\,M.\;Vinogradov,
A spectral sequence associated with a non-linear differential equation, and
the algebro-geometric foundations of Lagrangian field theory with constraints, Sov.\ Math.\ Dokl.\ 19 (1978) 144--148

\end{thebibliography}

\end{document}